\DeclareFontFamily{U}{euf}{}
\DeclareFontShape{U}{euf}{m}{n}{%
  <5><6><7><8><9>gen*eufm%
  <10><10.95><12><14.4><17.28><20.74><24.88>eufm10%
  }{}
\DeclareFontShape{U}{euf}{b}{n}{%
  <5><6><7><8><9>gen*eufb%
  <10><10.95><12><14.4><17.28><20.74><24.88>eufb10%
  }{}
\DeclareFontFamily{U}{msb}{}
\DeclareFontShape{U}{msb}{m}{n}{%
  <5><6><7><8><9>gen*msbm%
  <10><10.95><12><14.4><17.28><20.74><24.88>msbm10%
  }{}
\DeclareFontFamily{U}{msa}{}
\DeclareFontShape{U}{msa}{m}{n}{%
  <5><6><7><8><9>gen*msam%
  <10><10.95><12><14.4><17.28><20.74><24.88>msam10%
  }{}
\newtheorem{theorem}{Theorem}[section]
\newtheorem{lemma}[theorem]{Lemma}
\theoremstyle{definition}
\newtheorem{definition}[theorem]{Definition}
\newtheorem{remark}[theorem]{Remark}
\numberwithin{equation}{section}
\newcommand{\bm}[1]{\mbox{\boldmath{$#1$}}}
\begin{document}

\title[On a uniformly distributed
phenomenon] {On a uniformly distributed phenomenon in matrix groups}

\author{Su Hu and Yan Li}

\address{Department of Mathematics and Statistics, McGill University, 805 Sherbrooke St. West, Montr\'eal, Qu\'ebec H3A 2K6, Canada}
\email{hus04@mails.tsinghua.edu.cn, hu@math.mcgill.ca}

\address{Department of Applied Mathematics, China Agriculture University, Beijing 100083, China}
\email{liyan\_00@mails.tsinghua.edu.cn}

\subjclass[2000]{11C20, 11T23} \keywords{Matrices; Finite fields;
Uniform distribution; Character sum.}

\begin{abstract}We show that  a classical uniformly distributed
phenomenon for an element and its inverse in
($\mathbb{Z}/n\mathbb{Z})^{*}$ also exists in
$\textrm{GL}_{n}(\mathbb{F}_{p})$. A
$\textrm{GL}_{n}(\mathbb{F}_{p})$ analogy of the uniform
distribution on modular hyperbolas has also been considered.
\end{abstract}

\maketitle

\def\C{\mathbb C_p}
\def\BZ{\mathbb Z}
\def\Z{\mathbb Z_p}
\def\Q{\mathbb Q_p}
\def\C{\mathbb C_p}
\def\BZ{\mathbb Z}
\def\Z{\mathbb Z_p}
\def\Q{\mathbb Q_p}
\def\psum{\sideset{}{^{(p)}}\sum}
\def\pprod{\sideset{}{^{(p)}}\prod}

\section{Introduction}
The distance between an element $x\in (\mathbb{Z}/n\mathbb{Z})^{*}$
and its inverse $x^{-1}~(\textrm{mod~}n)$ has been studied by many
authors~\cite{BK,Sh1,Sh2,Sh3,TZ,Z1,Z2}. Shparlinski~\cite{Sh4} gave a
survey of a variety of recent results about the distribution and
some geometric properties of points $(x,y)$ on modular hyperbolas
$xy \equiv a \pmod n$.

Denote by $\{x\}$ the fractional part of a real number $x$. Let

$$\begin{aligned}f_{n}:\ \left(\mathbb{Z}/n\mathbb{Z}\right)^*
&\rightarrow [0,1]\times[0,1]\\
 x&\mapsto\left(\left\{\frac{x}{n}\right\},\left\{\frac{x^{-1}}{n}\right\}\right).\end{aligned}
$$

By using the Erd\"os-Tur\'an-Koksma  inequality and the
Weil-Estermann inequality for Kloosterman sum, Beck and
Khan~\cite{BK} gave an elegant proof for the following classical
result.

\begin{theorem}\label{change}
Let $R\subset [0,1]^2$ be a measurable set having the following
property that for every $\epsilon > 0$, there exist two finite
collections of non-overlapping rectangles $R_{1},\ldots,R_{k}$ and
$R^{1},\ldots,R^{l}$ such that $\cup_{i=1}^{k}R_{i}\subseteq R
\subseteq \cup_{j=1}^{l}R^{j}$, ${\rm
area}\left(R/\cup_{i=1}^{k}R_{i}\right) < \epsilon$ and ${\rm
area}\left(\cup_{j=1}^{l}R^{j}/R\right) < \epsilon$. Then
$$\lim_{n\rightarrow\infty}\frac{{\rm cardinality}({\rm Image}(f_{n})\cap
R)}{\varphi(n)}={\rm area}\left(R\right).$$\end{theorem}

\begin{remark} Notice that our statement of the above theorem is slightly different from the statement in Beck and
Khan~\cite{BK}. The statement in~\cite{BK} is as follows:

``Let $R\subseteq [0,1]^{2}$ be a measurable set having the
following property that for every $\epsilon > 0$, there exists a
finite collection of non-overlapping rectangles
$\{R_{1},R_{2},\ldots, R_{k}\}$ such that
$\cup_{i=1}^{k}R_{i}\subseteq R ~~{\rm and}~~ {\rm
area}\left(R/\cup_{i=1}^{k}R_{i}\right) < \epsilon.$ Then
$$\lim_{n\rightarrow\infty}\frac{{\rm cardinality}({\rm Image}(f_{n})\cap
R)}{\varphi(n)}={\rm area}\left(R\right)" $$(see Theorem 2 of
~\cite{BK}).

The original assumption should be strengthened. Otherwise there is a
counterexample as follows:

Let $R_{1}= [0,1/2)^{2}$  and $R_{2}=\{(x,y)\in [0,1]^{2}\mid
x,y\in\mathbb{Q}\}.$
 Denote by
$R=R_{1}\cup R_{2}$. Since ${\rm area}\left(R_{2}\right)=0$, we have
${\rm area}\left(R\right)={\rm area}\left(R_{1}\right)=1/4$. So $R$
satisfies the conditions in the  statement of Theorem 2
in~\cite{BK}. Since the image of $f_{n}$ are rational points in
$[0,1]^{2}$ and $R$ contains all the rational points in $[0,1]^{2}$,
we have ${\rm Image}(f_{n})\cap R =\varphi(n)$ for any positive
integer $n$, thus $$\lim_{n\rightarrow\infty}\frac{{\rm
cardinality}({\rm Image}(f_{n})\cap R)}{\varphi(n)}=1. $$ But ${\rm
area}\left(R\right)={\rm area}\left(R_{1}\right)=1/4$, so
$$\lim_{n\rightarrow\infty}\frac{{\rm cardinality}({\rm
Image}(f_{n})\cap R)}{\varphi(n)} \neq {\rm area}\left(R\right) .$$

Notice that, the new conditions in Theorem \ref{change} are quite
natural. Numerous types of regions satisfy the conditions of Theorem
\ref{change} such as polygons, disks, annuli lying in the unit
square.
\end{remark}

Beck and Khan~\cite[p.\,150]{BK}  remarked that: ``In all likelihood
this theorem dates back to the late 20's and early 30's and was
known to mathematicians such as Davenport, Estermann, Kloosterman,
Salie."

Let
$\mathbb{F}_{p}=\mathbb{Z}/p\mathbb{Z}=\{\bar{0},\bar{1},\ldots,\overline{p-1}\}$
be the finite field with $p$ elements, $M_{n}(\mathbb{F}_{p})$ be
the set of all $n \times n $ matrices over $\mathbb{F}_{p}$,
${\rm GL}_{n}(\mathbb{F}_{p}), \textrm{SL}_{n}(\mathbb{F}_{p})$
and  $\mathcal{Z}_{n}(\mathbb{F}_{p})$ be the group of invertible
matrices, the group of matrices of determinant 1 and the set of
singular matrices, respectively, where all matrices are from
$M_{n}(\mathbb{F}_{p})$.

In this paper, by using bounds of Ferguson, Hoffman, Ostafe, Luca
and Shparlinski~\cite{Sh} for the matrix analogue of classical
Kloosterman sums (see Lemma~\ref{l1} below), we show that the
above mentioned uniformly distributed phenomenon also exists in
${\rm GL}_{n}(\mathbb{F}_{p})$.

For $A=\left(\overline{a_{ij}}\right)\in
{\rm GL}_{n}(\mathbb{F}_{p}),A^{-1}=\left(\overline{b_{ij}}\right)$
  denotes the inverse of $A$.

Let \begin{equation}~\label{def1}g_{p}:\
{\rm GL}_{n}(\mathbb{F}_{p}) \rightarrow \underbrace{[0,1] \times
[0,1] \cdots \times [0,1]}_{2n^{2}}
\end{equation}
$$
A=\left(\overline{a_{ij}}\right)\mapsto\left(
  \begin{array}{cccccc}
    \frac{\displaystyle a_{11}}{\displaystyle p}, & \ldots, & \frac{\displaystyle a_{1n}}{\displaystyle p}, &\frac{\displaystyle b_{11}}{\displaystyle p}, & \ldots, & \frac{\displaystyle b_{1n}}{\displaystyle p}  \\
    \frac{\displaystyle a_{21}}{\displaystyle p}, & \ldots, & \frac{\displaystyle a_{2n}}{\displaystyle p}, &\frac{\displaystyle b_{21}}{\displaystyle p}, & \ldots, & \frac{\displaystyle b_{2n}}{\displaystyle p}    \\
    \vdots & \vdots & \vdots & \vdots & \vdots & \vdots \\
   \frac{\displaystyle a_{n1}}{\displaystyle p}, & \ldots, & \frac{\displaystyle a_{nn}}{\displaystyle p}, &\frac{\displaystyle b_{n1}}{\displaystyle p}, & \ldots, & \frac{\displaystyle b_{nn}}{\displaystyle p}   \\
  \end{array}
\right)$$

In fact, we prove the following theorem.

\begin{theorem}~\label{ma}
Let $R\subset [0,1]^{2n^{2}}$ be a measurable set having the
following property that for every $\epsilon > 0$, there exist two
finite collections of non-overlapping rectangles
$R_{1},\ldots,R_{k}$ and $R^{1},\ldots,R^{l}$ such that
$\cup_{i=1}^{k}R_{i}\subset R \subset \cup_{j=1}^{l}R^{j}$, ${\rm
area}\left(R/\cup_{i=1}^{k}R_{i}\right) < \epsilon$ and ${\rm
area}\left(\cup_{j=1}^{l}R^{j}/R\right) < \epsilon$. Then

$$\lim_{p\rightarrow\infty}\frac{{\rm cardinality}({\rm Image}(g_{p})\cap
R)}{\#{\rm GL}_{n}(\mathbb{F}_{p})}={\rm area}\left(R\right).$$
\end{theorem}
\begin{remark}  A
$\textrm{GL}_{n}(\mathbb{F}_{p})$ analogy of the uniform
distribution on modular hyperbolas can also be established using the
same procedure for the proof of the above theorem (see
Remark~\ref{r1} below).
\end{remark}

Furthermore, let \begin{equation} ~\label{def2}h_{p}:\ {\rm
GL}_{n}(\mathbb{F}_{p}) \rightarrow \underbrace{[0,1] \times [0,1]
\cdots \times [0,1]}_{n^{2}}
\end{equation}
$$
A=\left(\overline{a_{ij}}\right)\mapsto\left(
  \begin{array}{ccc}
    \frac{\displaystyle a_{11}}{\displaystyle p}, & \ldots, & \frac{\displaystyle a_{1n}}{\displaystyle p}  \\
    \frac{\displaystyle a_{21}}{\displaystyle p}, & \ldots, & \frac{\displaystyle a_{2n}}{\displaystyle p}     \\
    \vdots & \vdots & \vdots  \\
   \frac{\displaystyle a_{n1}}{\displaystyle p}, & \ldots, & \frac{\displaystyle a_{nn}}{\displaystyle p}  \\
  \end{array}
\right)$$
\begin{equation} ~\label{def3}s_{p}:\
\textrm{SL}_{n}(\mathbb{F}_{p}) \rightarrow \underbrace{[0,1] \times
[0,1] \cdots \times [0,1]}_{n^{2}}
\end{equation}
$$
A=\left(\overline{a_{ij}}\right)\mapsto\left(
  \begin{array}{ccc}
    \frac{\displaystyle a_{11}}{\displaystyle p}, & \ldots, & \frac{\displaystyle a_{1n}}{\displaystyle p}  \\
    \frac{\displaystyle a_{21}}{\displaystyle p}, & \ldots, & \frac{\displaystyle a_{2n}}{\displaystyle p}     \\
    \vdots & \vdots & \vdots  \\
   \frac{\displaystyle a_{n1}}{\displaystyle p}, & \ldots, & \frac{\displaystyle a_{nn}}{\displaystyle p}  \\
  \end{array}
\right)$$

Using the same methods, and the bounds of Ferguson, Hoffman,
Ostafe, Luca and Shparlinski~\cite{Sh} on character sums along
$\mathcal {Z}_{n}(\mathbb{F}_{p})$ and ${\rm
SL}_{n}(\mathbb{F}_{p})$ (see Lemmas~\ref{l2},~\ref{l3}~
and~\ref{l4} below), we can also obtain the following two results.

\begin{theorem}\label{s2}Let $R\subseteq [0,1]^{n^{2}}$ be a measurable set having the
same property as in Theorem \ref{ma}. Then

$$\lim_{p\rightarrow\infty}\frac{{\rm cardinality}({\rm Image}(h_{p})\cap
R)}{\#{\rm GL}_{n}(\mathbb{F}_{p})}={\rm area}\left(R\right).$$
\end{theorem}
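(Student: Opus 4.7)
The plan is to mimic the Erd\H{o}s-Tur\'an-Koksma route used for Theorem~\ref{ma}, but with a crucial simplification: because $h_p$ involves only the entries of $A$ and not those of $A^{-1}$, the character sums that appear do not require any Kloosterman-type estimate and can instead be handled directly via the singular-matrix bound of Lemma~\ref{l2}. First I would fix a nonzero integer vector $\mathbf{m} = (m_{ij}) \in \mathbb{Z}^{n^2}$ with $\|\mathbf{m}\|_\infty \leq H$ and, writing $M = (m_{ij} \bmod p)$, consider the character sum
$$S_p(M) \;=\; \sum_{A \in \mathrm{GL}_n(\mathbb{F}_p)} e_p(\mathrm{tr}(MA)),$$
which is precisely what appears in the $\mathbf{m}$-th Fourier term of the discrepancy of the image of $h_p$ in $[0,1]^{n^2}$.

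Second, I would split this sum as
$$S_p(M) \;=\; \sum_{A \in M_n(\mathbb{F}_p)} e_p(\mathrm{tr}(MA)) \;-\; \sum_{A \in \mathcal{Z}_n(\mathbb{F}_p)} e_p(\mathrm{tr}(MA)).$$
When $M \not\equiv 0 \pmod p$, orthogonality of additive characters on $M_n(\mathbb{F}_p) \cong \mathbb{F}_p^{n^2}$ makes the first sum vanish. The second sum is a character sum along the singular matrices, and is therefore controlled by Lemma~\ref{l2}, giving a bound of the shape $|S_p(M)| \leq p^{n^2 - c}$ for some absolute constant $c > 0$ depending on $n$. Dividing by $\#\mathrm{GL}_n(\mathbb{F}_p) \asymp p^{n^2}$ produces a uniform Fourier coefficient bound $O(p^{-c})$, independent of $M$ in the relevant range.

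Third, I would feed these estimates into the multidimensional Erd\H{o}s-Tur\'an-Koksma inequality, with $H$ chosen as a small positive power of $p$, to conclude that the discrepancy of the point set $\mathrm{Image}(h_p)$ in $[0,1]^{n^2}$ tends to $0$ as $p \to \infty$. This gives the conclusion of the theorem whenever $R$ is itself a rectangle. The passage from rectangles to general measurable $R$ is then exactly the sandwich argument already used in Theorem~\ref{ma}: given $\epsilon > 0$, approximate $R$ from inside and outside by finite unions $\cup_i R_i \subseteq R \subseteq \cup_j R^j$ of non-overlapping rectangles whose symmetric differences with $R$ have area less than $\epsilon$, apply the rectangle case to each $R_i$ and $R^j$, and let $\epsilon \to 0$ after $p \to \infty$.

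The main obstacle is essentially concentrated in Step~2, where one must verify that $S_p(M)$ is genuinely small for every nonzero $M$ in the Erd\H{o}s-Tur\'an-Koksma range; once Lemma~\ref{l2} is applied this becomes routine, so with that ingredient in hand no serious difficulty remains. The only technical point to watch is the uniformity of the bound in $M$ (as opposed to only for generic $M$), but Lemma~\ref{l2} is exactly designed to supply such a uniform estimate, so the plan goes through.
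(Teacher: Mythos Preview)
Your proposal is correct and is essentially the paper's own proof: the paper simply invokes Lemma~\ref{l3} for $S(\mathrm{GL}_n(\mathbb{F}_p),U)$ in place of the Kloosterman bound, and the proof of Lemma~\ref{l3} is precisely your splitting argument (full matrix sum minus the singular-matrix sum, the latter bounded by Lemma~\ref{l2}). One cosmetic point: your pairing $\mathrm{tr}(MA)$ differs from the paper's $U\cdot X=\sum u_{ij}x_{ij}$ by a transpose, but since $M\mapsto M^{T}$ is a bijection on nonzero matrices this is immaterial.
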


\begin{theorem}\label{s3}Assumption as above, then

$$\lim_{p\rightarrow\infty}\frac{{\rm cardinality}({\rm Image}(s_{p})\cap
R)}{\#{\rm SL}_{n}(\mathbb{F}_{p})}={\rm area}\left(R\right).$$
\end{theorem}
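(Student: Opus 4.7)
The plan is to follow the same three-step pattern that yields Theorems~\ref{ma} and~\ref{s2}: approximate $R$ by unions of non-overlapping boxes, use the Erd\"os-Tur\'an-Koksma inequality to reduce the box case to a character sum, and invoke Lemma~\ref{l4} to control that sum over $\mathrm{SL}_{n}(\mathbb{F}_{p})$.

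For the approximation step, given $\varepsilon>0$ and the hypothesized collections with $\bigcup_{i=1}^{k} R_{i} \subseteq R \subseteq \bigcup_{j=1}^{l} R^{j}$, the obvious inclusions yield
\[
\sum_{i=1}^{k} \mathrm{card}(\mathrm{Image}(s_{p})\cap R_{i}) \le \mathrm{card}(\mathrm{Image}(s_{p})\cap R) \le \sum_{j=1}^{l} \mathrm{card}(\mathrm{Image}(s_{p})\cap R^{j}).
\]
Dividing by $\#\mathrm{SL}_{n}(\mathbb{F}_{p})$ and sending $p\to\infty$, and assuming the result is already known for every box, the flanking sums converge to $\sum_{i}\mathrm{area}(R_{i})$ and $\sum_{j}\mathrm{area}(R^{j})$, each within $\varepsilon$ of $\mathrm{area}(R)$. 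Letting $\varepsilon\to 0$ closes the argument, so the entire problem reduces to the box case.

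For a box $R' = \prod_{i,j}[\alpha_{ij},\beta_{ij}) \subseteq [0,1]^{n^{2}}$, the $n^{2}$-dimensional Erd\"os-Tur\'an-Koksma inequality applied to the sequence $\{s_{p}(A) : A \in \mathrm{SL}_{n}(\mathbb{F}_{p})\}$ gives, for every integer $T \ge 1$,
\[
\left|\frac{\mathrm{card}(\mathrm{Image}(s_{p})\cap R')}{\#\mathrm{SL}_{n}(\mathbb{F}_{p})} - \mathrm{area}(R')\right| \ll \frac{1}{T} + \sum_{\substack{H \in M_{n}(\mathbb{Z})\\ 0 < \|H\|_{\infty} \le T}} \frac{1}{r(H)} \left|\frac{1}{\#\mathrm{SL}_{n}(\mathbb{F}_{p})} \sum_{A \in \mathrm{SL}_{n}(\mathbb{F}_{p})} e_{p}(\mathrm{tr}(HA))\right|,
\]
with $r(H) = \prod_{i,j}\max(1,|h_{ij}|)$ and $e_{p}(x) = \exp(2\pi i x/p)$. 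As long as $T < p/2$, a nonzero integer matrix $H$ with $\|H\|_{\infty} \le T$ reduces modulo $p$ to a nonzero element of $M_{n}(\mathbb{F}_{p})$, so Lemma~\ref{l4} applies and bounds the inner sum by some $E(p)$ with $E(p) = o(\#\mathrm{SL}_{n}(\mathbb{F}_{p}))$, uniformly in $H$. Combined with the elementary estimate $\sum_{0 < \|H\|_{\infty} \le T} 1/r(H) \ll (1+\log T)^{n^{2}}$, this yields
\[
\left|\frac{\mathrm{card}(\mathrm{Image}(s_{p})\cap R')}{\#\mathrm{SL}_{n}(\mathbb{F}_{p})} - \mathrm{area}(R')\right| \ll \frac{1}{T} + \frac{(1+\log T)^{n^{2}}\, E(p)}{\#\mathrm{SL}_{n}(\mathbb{F}_{p})},
\]
and a choice of $T = T(p) \to \infty$ growing as a sufficiently small positive power of $p$ forces both terms to zero.

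The only real obstacle is packaged into Lemma~\ref{l4}: obtaining nontrivial cancellation in $\sum_{A \in \mathrm{SL}_{n}(\mathbb{F}_{p})} e_{p}(\mathrm{tr}(HA))$ for every nonzero $H$ of moderate size is considerably more delicate than in the $\mathrm{GL}_{n}$ case of Theorem~\ref{ma}, since the constraint $\det A = 1$ destroys the row-by-row product structure that makes $\mathrm{GL}_{n}$ or $M_{n}$ transparent. Once that estimate is granted, everything else is a mechanical assembly of ETK with the box-approximation squeeze.
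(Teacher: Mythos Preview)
Your proposal is correct and follows essentially the same route as the paper: reduce to boxes by the squeeze argument, apply the Erd\H{o}s--Tur\'an--Koksma inequality, and feed in the character-sum bound of Lemma~\ref{l4} together with $\#\mathrm{SL}_{n}(\mathbb{F}_{p})=p^{n^{2}-1}+O(p^{n^{2}-3})$. Your variant of restricting $T<p/2$ (so nonzero $H$ stays nonzero mod~$p$) and using $\sum_{0<\|H\|_\infty\le T}1/r(H)\ll(1+\log T)^{n^{2}}$ is a cosmetic refinement of the paper's cruder counting $\#\{H\}\ll T^{n^{2}}$, but the argument is otherwise identical.
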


\section{Preliminaries}\label{sec:sec2}
We need some lemmas to prove the main theorems.

First, we recall some results in~\cite{Sh}.

Given two matrices $U=(u_{ij}),X=(x_{ij})\in M_{n}(\mathbb{F}_{p})$,
their product is defined by $$U\cdot X
= \sum_{\substack{1\leq i\leq n \\
1\leq j\leq n}}u_{ij}x_{ij}$$ (see \cite[p.\,503]{Sh}).

Let $q$ be a power of a prime number, $\Psi$ be a fixed nonprincipal
additive character of $\mathbb{F}_{q}$. For $\mathcal{M},U,V\in
M_{n}(\mathbb{F}_{q})$, let
$$K({\rm GL}_{n}(\mathbb{F}_{q}),U,V,\mathcal{M}) =
\sum_{X\in{\rm GL}_{n}(\mathbb{F}_{q})}\Psi(U \cdot X + V
\cdot(\mathcal{M}X^{-1}))$$ be the matrix analogue of classical
Kloosterman sums (see \cite[p.\,505]{Sh}) and
\begin{equation*}\begin{aligned}\\S({\rm GL}_{n}(\mathbb{F}_{q}),U) &=
\sum_{X\in{\rm GL}_{n}(\mathbb{F}_{q})}\Psi(U \cdot X
),\\S(\textrm{SL}_{n}(\mathbb{F}_{q}),U) &=
\sum_{X\in\textrm{SL}_{n}(\mathbb{F}_{q})}\Psi(U \cdot X ),
\\S(\mathcal{Z}_{n}(\mathbb{F}_{q}),U)& =
\sum_{X\in\mathcal{Z}_{n}(\mathbb{F}_{q})}\Psi(U \cdot
X).\end{aligned}\end{equation*}
 The authors in~\cite{Sh} obtained the following
results.

\begin{lemma}~\label{l1} (see \cite[Lemma 5]{Sh}) Uniformly over all matrices
$U,V\in M_{n}(\mathbb{F}_{q})$ among which at least one is a nonzero
matrix, and $\mathcal{M}\in{\rm GL}_{n}(\mathbb{F}_{q})$, we have
$$K({\rm GL}_{n}(\mathbb{F}_{q}),U,V,\mathcal{M})\ll q^{n^{2}-1/2},$$
where the implied constant in the symbol $``\ll"$ depends only on
$n$.\end{lemma}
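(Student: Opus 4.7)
The plan is to follow the classical recipe for non-trivial bounds on Kloosterman-type sums on a group: square the sum, apply a multiplicative substitution that exploits the $\mathrm{GL}_n$ structure, and extract a square-root saving over the trivial bound $\#\mathrm{GL}_n(\mathbb{F}_q)\asymp q^{n^2}$. First I would rewrite the bilinear pairing as a trace, since $U\cdot X=\mathrm{tr}(U^\top X)$. Setting $A=U^\top$ and $B=V^\top\mathcal{M}$, the sum becomes
$$K=\sum_{X\in\mathrm{GL}_n(\mathbb{F}_q)}\Psi\bigl(\mathrm{tr}(AX)+\mathrm{tr}(BX^{-1})\bigr).$$
I would reduce to the case $A\neq 0$ by observing that the substitution $X\mapsto X^{-1}$ is a bijection on $\mathrm{GL}_n(\mathbb{F}_q)$ and swaps the roles of $A$ and $B$; since $\mathcal{M}\in\mathrm{GL}_n(\mathbb{F}_q)$, the hypothesis that at least one of $U,V$ is nonzero guarantees at least one of $A,B$ is nonzero.

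Next I would compute $|K|^2$ as a double sum over $X,Y\in\mathrm{GL}_n(\mathbb{F}_q)$ and substitute $X=YZ$. Since $X$ ranges over $\mathrm{GL}_n(\mathbb{F}_q)$ for each fixed $Y$, this yields
$$|K|^2=\sum_{Z\in\mathrm{GL}_n(\mathbb{F}_q)}\sum_{Y\in\mathrm{GL}_n(\mathbb{F}_q)}\Psi\bigl(\mathrm{tr}(A(YZ-Y))+\mathrm{tr}(B(Z^{-1}-I)Y^{-1})\bigr).$$
The diagonal contribution $Z=I$ is exactly $\#\mathrm{GL}_n(\mathbb{F}_q)\ll q^{n^2}$. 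For the off-diagonal range $Z\neq I$, the inner sum over $Y$ is itself a matrix Kloosterman sum, but now with a coefficient matrix $A(Z-I)$ in front of $Y$ that is generically nonzero. The goal is to show that for generic $Z$ this inner sum is bounded by $q^{n^2-1}$, giving the required $q^{2n^2-1}$ total.

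To control the inner $Y$-sum I would invoke bounds on linear additive character sums over $\mathrm{GL}_n(\mathbb{F}_q)$ of the form $\sum_{Y\in\mathrm{GL}_n(\mathbb{F}_q)}\Psi(\mathrm{tr}(CY))$, derived either by inclusion-exclusion on the singular locus or by expressing $\mathrm{GL}_n$ as $M_n\setminus\mathcal{Z}_n$ and using the Plancherel/Fourier decomposition of the indicator function of $\mathcal{Z}_n(\mathbb{F}_q)$. The orbit of $C$ under conjugation, together with the rank of $C$, governs the cancellation. Combining this with the off-diagonal estimate and collecting the $q^{n^2}$ range of $Z$ yields $|K|^2\ll q^{2n^2-1}$, i.e.\ $|K|\ll q^{n^2-1/2}$.

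The main obstacle is the off-diagonal inner sum: the matrix $A(Z-I)$ does not parameterize independently of $B(Z^{-1}-I)$, so the substitution does not fully decouple, and one must keep track of how the rank of $A(Z-I)$ varies as $Z$ ranges over $\mathrm{GL}_n(\mathbb{F}_q)$. This is the genuinely nontrivial content and is the reason the authors invoke the machinery of \cite{Sh} rather than redo the argument; a self-contained proof would essentially reproduce the Fourier analysis on $\mathrm{GL}_n(\mathbb{F}_q)$ developed there.
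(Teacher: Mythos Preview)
The paper does not prove this lemma at all; it is quoted verbatim from \cite{Sh}, so there is no ``paper's own proof'' to compare against. What matters is whether your sketch would actually yield the bound.

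There is a genuine gap in your plan. After you square and substitute $X=YZ$, the inner sum over $Y$ is
\[
\sum_{Y\in\mathrm{GL}_n(\mathbb{F}_q)}\Psi\bigl(\mathrm{tr}((Z-I)A\,Y)+\mathrm{tr}(B(Z^{-1}-I)\,Y^{-1})\bigr),
\]
which is again a full matrix Kloosterman sum of exactly the same shape as $K$, with new parameters $(Z-I)A$ and $B(Z^{-1}-I)$. It is \emph{not} a linear character sum $\sum_Y\Psi(\mathrm{tr}(CY))$, because the $Y^{-1}$ term is still present. Your step ``invoke bounds on linear additive character sums over $\mathrm{GL}_n$'' therefore does not apply, and to get $|K|^2\ll q^{2n^2-1}$ you would need the inner sum to be $\ll q^{n^2-1}$ for generic $Z$ --- a bound strictly stronger than the $q^{n^2-1/2}$ you are trying to prove. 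The Weyl-differencing approach is circular here.

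A route that does work, and is presumably what \cite{Sh} does, is to average over the centre rather than the whole group. Replacing $X$ by $\lambda X$ for $\lambda\in\mathbb{F}_q^{\times}$ gives
\[
K=\frac{1}{q-1}\sum_{X\in\mathrm{GL}_n(\mathbb{F}_q)}\ \sum_{\lambda\in\mathbb{F}_q^{\times}}\Psi\bigl(\lambda\,\mathrm{tr}(AX)+\lambda^{-1}\,\mathrm{tr}(BX^{-1})\bigr),
\]
and the inner sum is a \emph{classical} one-variable Kloosterman sum, bounded by $2q^{1/2}$ via Weil whenever both $\mathrm{tr}(AX)$ and $\mathrm{tr}(BX^{-1})$ are nonzero. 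The exceptional sets where one of these traces vanishes lie in hyperplanes of $M_n(\mathbb{F}_q)$ (since at least one of $A,B$ is nonzero), hence have size $O(q^{n^2-1})$, which is already $\ll q^{n^2-1/2}$. Summing gives $|K|\ll q^{n^2-1/2}$ directly, with no circularity.
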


\begin{lemma}~\label{l2} (see \cite[Lemma 3]{Sh}) Uniformly over all nonzero matrices
$U\in M_{n}(\mathbb{F}_{q})$, we have
$$S(\mathcal{Z}_{n}(\mathbb{F}_{q}),U)= O(q^{n^{2}-5/2}),$$
where the implied constant in the symbol $``O"$ depends only on
$n$.\end{lemma}

\begin{lemma}~\label{l3}  Uniformly over all nonzero matrices
$U\in\textrm{M}_{n}(\mathbb{F}_{q})$, we have
$$S({\rm GL}_{n}(\mathbb{F}_{q}),U)= O(q^{n^{2}-5/2}),$$
where the implied constant in the symbol $``O"$ depends only on
$n$.\end{lemma}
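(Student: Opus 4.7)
The plan is to derive this bound directly from Lemma~\ref{l2} by exploiting the disjoint union decomposition $M_{n}(\mathbb{F}_{q}) = {\rm GL}_{n}(\mathbb{F}_{q}) \sqcup \mathcal{Z}_{n}(\mathbb{F}_{q})$, which yields the identity
$$S({\rm GL}_{n}(\mathbb{F}_{q}),U) = S(M_{n}(\mathbb{F}_{q}),U) - S(\mathcal{Z}_{n}(\mathbb{F}_{q}),U).$$
So the task reduces to showing that the full sum over $M_{n}(\mathbb{F}_{q})$ vanishes whenever $U \neq 0$.

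The first step is to compute $S(M_{n}(\mathbb{F}_{q}),U)$ explicitly. Writing $U = (u_{ij})$ and $X = (x_{ij})$, the bilinear pairing $U \cdot X = \sum_{i,j} u_{ij} x_{ij}$ decouples the entries, so that
$$S(M_{n}(\mathbb{F}_{q}),U) = \sum_{X \in M_{n}(\mathbb{F}_{q})} \Psi(U \cdot X) = \prod_{1 \leq i,j \leq n} \sum_{x_{ij} \in \mathbb{F}_{q}} \Psi(u_{ij} x_{ij}).$$
Since $U$ is nonzero, at least one entry $u_{i_0 j_0}$ is nonzero; by orthogonality of the nontrivial additive character $\Psi$, the corresponding inner sum vanishes, and hence the whole product is $0$.

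The second step is to apply Lemma~\ref{l2} to the remaining term, which gives
$$S({\rm GL}_{n}(\mathbb{F}_{q}),U) = -S(\mathcal{Z}_{n}(\mathbb{F}_{q}),U) = O(q^{n^{2} - 5/2}),$$
with an implied constant depending only on $n$, as required.

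There is no real obstacle here: the proof is a one-line reduction once one notices the decomposition of $M_{n}(\mathbb{F}_{q})$ and the vanishing of the total sum. The only thing worth being careful about is verifying that the pairing $U\cdot X$ used in defining these character sums is indeed separable across the entries $x_{ij}$, which is immediate from its definition.
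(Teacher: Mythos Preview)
Your proof is correct and follows essentially the same approach as the paper's own argument: both use the decomposition $M_{n}(\mathbb{F}_{q}) = {\rm GL}_{n}(\mathbb{F}_{q}) \sqcup \mathcal{Z}_{n}(\mathbb{F}_{q})$, observe that the full sum $\sum_{X \in M_{n}(\mathbb{F}_{q})} \Psi(U \cdot X)$ vanishes for nonzero $U$, and then invoke Lemma~\ref{l2}. The only cosmetic difference is that the paper phrases the vanishing by noting that $X \mapsto \Psi(U \cdot X)$ is a nontrivial additive character on $M_{n}(\mathbb{F}_{q})$, whereas you factor the sum explicitly as a product over the entries.
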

\begin{proof} For any nonzero matrix
$U\in M_{n}(\mathbb{F}_{q})$, $\bar{\Psi}(X)=\Psi(U \cdot
X)$ is also a nontrivial additive character on
$M_{n}(\mathbb{F}_{q})$, so we have
$$S(\mathcal{Z}_{n}(\mathbb{F}_{q}),U)+S({\rm GL}_{n}(\mathbb{F}_{q}),U)=\sum_{X\in M_{n}(\mathbb{F}_{q})}\Psi(U \cdot X
)=0.$$ From Lemma~\ref{l2}, we obtain the result.
\end{proof}

\begin{lemma}~\label{l4} (see \cite[Lemma 4]{Sh}) Uniformly over all nonzero matrices
$U\in\textrm{M}_{n}(\mathbb{F}_{q})$, we have
$$S({\rm SL}_{n}(\mathbb{F}_{q}),U)= O(q^{n^{2}-2}),$$
where the implied constant in the symbol $``O"$ depends only on
$n$.\end{lemma}

\begin{remark}Recently, we obtained explicit expressions of $S({\rm
GL}_{n}(\mathbb{F}_{q}),U)$ and $S({\rm
SL}_{n}(\mathbb{F}_{q}),U)$.(See \cite{LH} Theorems 2.1 and
2.4).  Such expressions only involve Gauss sums and Kloosterman
sums. As a consequence, we got
\begin{equation*}
\begin{aligned}S({\rm GL}_{n}(\mathbb{F}_{q}),U)&=
O(q^{n^{2}-n}),\\ S({\rm SL}_{n}(\mathbb{F}_{q}),U)=
O(q^{n^{2}-n})&=O(\max\{q^{n^{2}-n-1},q^{(n^{2}-1)/2}\}).\end{aligned}
\end{equation*} (See \cite{LH} Corollaries 2.2 and
2.5).
Our bounds
 improved the bounds in Lemmas~\ref{l3} and \ref{l4}. (See \cite{LH} Remarks 2.3 and 2.6).
\end{remark}

Next we recall Erd\"os-Tur\'an-Koksma's inequality for the
discrepancy of sequences.

Let $B=[a_{1},b_{1})\times\cdots\times [a_{k},b_{k})\subseteq
[0,1)^{k}$ be a rectangle, (${\bm x_{n}}$) be a sequence in
$[0,1)^{k}$, and $A(B,N,{\bm x_{n}})$ be the number of points ${\bm
x_{n}}$, $1\leq n\leq N$, such that $ {\bm x_{n}}\in B$, i.e.
$$A(B,N,{\bm x_{n}})=\sum_{n=1}^{N}\chi_{B}({\bm x_{n}}),$$
where $\chi_{B}$ is the characteristic function of $B$.

\begin{definition} (see \cite[p.\,4]{Ti}) Let $\{{\bm x_{1}},\ldots,{\bm x_{N}}\}$ be a
finite sequence of points in $[0,1)^{k}$. Then the number
$$D_{N}=D_{N}({\bm x_{1}},\ldots,{\bm x_{N}})=
\textrm{sup}_{B\subseteq [0,1)^{k}}\Big|\frac{A(B,N,{\bm x_{n}})}{N} -
\textrm{area}(B)\Big|$$ is called the discrepancy of the given sequence,
where $B$ runs over all rectangles located in $[0,1)^{k}$.
\end{definition}

Set $e(x)=\textrm{exp}(2\pi i x)$ and denote the usual inner product
in $\mathbb{R}^{k}$ by ${\bm
x}\cdot\bm{y}=\sum_{i=1}^{k}x_{i}y_{i}$.

The Erd\"os-Tur\'an-Koksma inequality provides an upper bound for
the discrepancy.

\begin{lemma}~\label{l5} (see \cite[p.\,15]{Ti} or \cite[p.\,63]{Nie}) Let $\{{\bm x_{1}},\ldots,{\bm x_{N}}\}$ be a
finite sequence of points in $[0,1)^{k}$ and $H$ an arbitrary
positive integer. Then

$$D_{N}\leq
\Big(\frac{3}{2}\Big)^{k}\Big(\frac{2}{H+1}+\sum_{0 < ||{\bm
h}||_{\infty}\leq H} \frac{1}{r({\bm
h})}\Big|\frac{1}{N}\sum_{n=1}^{N} e({\bm h}\cdot{\bm
x_{n}})\Big|\Big),$$ where $r({\bm h})=\prod_{i=1}^{k}{\rm
max}\{1,|h_{i}|\}$ and $||{\bm h}||_{\infty}={\rm max}\{\ |h_{i}|\
|1\leq i\leq k\}$, for ${\bm h}=(h_{1},\ldots,h_{k}) \in
\mathbb{Z}^{k}.$\end{lemma}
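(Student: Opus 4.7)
The plan is to prove the Erdős–Turán–Koksma inequality by the classical Vaaly–Selberg trigonometric approximation of indicator functions. Fix an arbitrary rectangle $B=\prod_{i=1}^{k}[a_{i},b_{i})\subseteq[0,1)^{k}$; since
$$\frac{A(B,N,{\bm x_{n}})}{N}-{\rm area}(B)=\frac{1}{N}\sum_{n=1}^{N}\chi_{B}({\bm x_{n}})-\int_{[0,1)^{k}}\chi_{B}({\bm x})\,d{\bm x},$$
the task reduces to bounding the absolute value of this quantity uniformly in $B$. The strategy is to sandwich $\chi_{B}$ between trigonometric polynomials of degree at most $H$ in each coordinate, then expand in Fourier series and let the exponential sums in the statement absorb all non-constant modes.

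First I would invoke the one-dimensional Vaaly–Selberg construction: for each coordinate $i$, produce trigonometric polynomials $\Psi_{i}^{\pm}(x)=\sum_{|h|\leq H}c_{i,h}^{\pm}\,e(hx)$ of degree $\leq H$ satisfying $\Psi_{i}^{-}\leq\chi_{[a_{i},b_{i})}\leq\Psi_{i}^{+}$ on $[0,1)$, with constant coefficients $c_{i,0}^{\pm}=(b_{i}-a_{i})\pm 1/(H+1)$ and nonzero coefficients controlled by $|c_{i,h}^{\pm}|\leq 1/(H+1)+\min(b_{i}-a_{i},1/(\pi|h|))$. These are built from suitably shifted and scaled Fejér kernels; the crucial features are that the zeroth Fourier coefficient reproduces the interval length up to error $1/(H+1)$ while the remaining coefficients decay like $1/|h|$. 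I would then form the multivariate majorant $\Psi^{+}({\bm x})=\prod_{i=1}^{k}\Psi_{i}^{+}(x_{i})$, which pointwise dominates $\chi_{B}=\prod_{i}\chi_{[a_{i},b_{i})}$ because all factors are nonnegative, together with an analogously constructed minorant $\Psi^{-}$ (a mild telescoping trick is needed since a bare product of minorants is not automatically a minorant of the product). Each $\Psi^{\pm}$ admits a Fourier expansion $\Psi^{\pm}({\bm x})=\sum_{||{\bm h}||_{\infty}\leq H}a_{\bm h}^{\pm}\,e({\bm h}\cdot{\bm x})$ with $a_{\bm h}^{\pm}=\prod_{i}c_{i,h_{i}}^{\pm}$.

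Inserting these expansions into the sandwich $\frac{1}{N}\sum_{n}\Psi^{-}({\bm x_{n}})\leq\frac{1}{N}\sum_{n}\chi_{B}({\bm x_{n}})\leq\frac{1}{N}\sum_{n}\Psi^{+}({\bm x_{n}})$ and separating the constant Fourier mode produces an upper estimate of the shape
$$\Bigl|a_{\bm 0}^{\pm}-{\rm area}(B)\Bigr|+\sum_{0<||{\bm h}||_{\infty}\leq H}|a_{\bm h}^{\pm}|\cdot\Bigl|\frac{1}{N}\sum_{n=1}^{N}e({\bm h}\cdot{\bm x_{n}})\Bigr|.$$
The main obstacle is the bookkeeping of constants needed to extract exactly $(3/2)^{k}$. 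One must use the elementary inequality $1/(H+1)+1/(\pi|h|)\leq (3/2)/\max(1,|h|)$, valid for $1\leq|h|\leq H$ since $1+1/\pi<3/2$, to promote the per-coordinate bound $|c_{i,h}^{\pm}|\leq (3/2)/\max(1,|h|)$, whence $|a_{\bm h}^{\pm}|\leq (3/2)^{k}/r({\bm h})$ by multiplicativity; simultaneously, one must telescope the main-term discrepancy $|a_{\bm 0}^{\pm}-{\rm area}(B)|=\bigl|\prod_{i}(b_{i}-a_{i}\pm 1/(H+1))-\prod_{i}(b_{i}-a_{i})\bigr|$ into at most $(3/2)^{k}\cdot 2/(H+1)$ using the same $3/2$ factor per coordinate. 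Once these constants are assembled, taking the supremum over rectangles $B\subseteq[0,1)^{k}$ delivers exactly the bound on $D_{N}$ stated in Lemma~\ref{l5}.
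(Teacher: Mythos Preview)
The paper does not prove this lemma: it is stated as a known result with citations to Drmota--Tichy and Niederreiter--Winterhof, and no argument is supplied. Your proposal is an outline of the classical proof via Beurling--Selberg (not ``Vaaly'') trigonometric envelopes, which is precisely how the inequality is established in the cited references, so in that sense you are supplying strictly more than the paper does.

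The sketch is sound in its broad strokes, but two points deserve tightening if you write it out in full. First, the product of one-dimensional minorants $\prod_i\Psi_i^-$ is \emph{not} automatically a minorant of $\chi_B$, since $\Psi_i^-$ can take negative values; the ``mild telescoping trick'' you allude to is actually the main technical wrinkle in the multidimensional case, and standard treatments either avoid minorants altogether (bounding the lower deviation via majorants of complementary boxes) or use a hybrid expansion in which only one factor at a time is a minorant. Second, your bound $|c_{i,0}^\pm|\le 3/2$ requires $(b_i-a_i)+1/(H+1)\le 3/2$, hence $H\ge 1$, which is fine since $H$ is a positive integer but should be noted. Neither of these is a genuine gap---you have correctly identified the skeleton of the standard argument and the source of the constant $(3/2)^k$.
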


\section{Proofs of  main results}
\textbf{Proof of Theorem~\ref{ma}:}

We only need to prove the case that
$R=[a_{1},b_{1})\times\cdots\times [a_{k},b_{k})\subset [0,1)^{k}$
is a rectangle. The reason is as follows:

Assume the theorem holds for $R$ being a rectangle. Let $R$ be a
measurable set as in the assumptions. For every $\epsilon > 0$, let
$R_{1},\ldots,R_{k}$ and $R^{1},\ldots,R^{l}$ be two finite
collections of non-overlapping rectangles such that
$$\cup_{i=1}^{k}R_{i}\subseteq R \subseteq \cup_{j=1}^{l}R^{j},\ {\rm
area}\left(R/\cup_{i=1}^{k}R_{i}\right) < \epsilon\ {\rm and}\ {\rm
area}\left(\cup_{j=1}^{l}R^{j}/R\right) < \epsilon.$$ Then
\begin{equation}~\label{p1}\begin{aligned}\sum_{i=1}^{k}{\rm area}\left(R_i\right)&\leq{\rm
area}\left(R\right)\leq\sum_{j=1}^{l}{\rm area}\left(R^j\right),
\\\sum_{i=1}^{k}\frac{\#(\textrm{Image}(g_{p})\cap R_i)}{\#{\rm
GL}_{n}(\mathbb{F}_{p})}&\leq\frac{\#(\textrm{Image}(g_{p})\cap
R)}{\#{\rm
GL}_{n}(\mathbb{F}_{p})}\leq\sum_{j=1}^{l}\frac{\#(\textrm{Image}(g_{p})\cap
R^j)}{\#{\rm GL}_{n}(\mathbb{F}_{p})}.
\end{aligned}\end{equation}
Taking $p$ sufficiently large, the left hand (resp. right hand)
sides of the above two inequalities are sufficiently close, i.e,
there exists $M$ such that if $p>M$ then
\begin{equation}~\label{p2}\begin{aligned}\Big|\sum_{i=1}^{k}\frac{\#(\textrm{Image}(g_{p})\cap R_i)}{\#{\rm
GL}_{n}(\mathbb{F}_{p})}-\sum_{i=1}^{k}{\rm
area}\left(R_i\right)\Big|&<\epsilon,
\\\Big|\sum_{j=1}^{l}\frac{\#(\textrm{Image}(g_{p})\cap R^j)}{\#{\rm
GL}_{n}(\mathbb{F}_{p})}-\sum_{j=1}^{l}{\rm
area}\left(R^j\right)\Big|&<\epsilon.
\end{aligned}\end{equation}
Since \begin{equation}~\label{p3}0\leq \sum_{j=1}^{l}{\rm
area}\left(R^j\right)-\sum_{i=1}^{k}{\rm
area}\left(R_i\right)<2\epsilon,\end{equation} then from
inequalities~(\ref{p1}), (\ref{p2}), and~(\ref{p3}), for $p>M$, we
have $$\Big|\frac{\#(\textrm{Image}(g_{p})\cap R)}{\#{\rm
GL}_{n}(\mathbb{F}_{p})}-\textrm{area}(R)\Big|<4\epsilon.
$$
This implies that
$$\lim_{p\rightarrow\infty}\frac{{\rm cardinality}({\rm Image}(g_{p})\cap
R)}{\#{\rm GL}_{n}(\mathbb{F}_{p})}={\rm area}\left(R\right).$$

Now we prove the fundamental case in which $R$ is the rectangle\\
$[a_{1},b_{1})\times\cdots\times [a_{k},b_{k})$.

 In Lemma \ref{l5}, viewing the
points ${\bm x}$ in $\textrm{Image}(g_{p})$ as a sequence in
$[0,1)^{k}$, where $N=\#{\rm GL}_{n}(\mathbb{F}_{p})$ and $k =
2n^{2}$, we have

\begin{equation}~\label{t1}
\begin{aligned}&\Big|\frac{\textrm{cardinality}(\textrm{Image}(g_{p})\cap
R)}{\#{\rm GL}_{n}(\mathbb{F}_{p})}-\textrm{area}(R)\Big|\\&\ll
\frac{2}{H+1}+\sum_{0 < ||{\bm h}||_{\infty}\leq H}
\frac{1}{r({\bm h})}\Big|\frac{1}{\#{\rm
GL}_{n}(\mathbb{F}_{p})}\sum_{{\bm x}\in\textrm{Image}(g_{p})}
e({\bm h}\cdot{\bm x})\Big|,\end{aligned}
\end{equation}
where $r({\bm h})=\prod_{\substack{1\leq i\leq n \\
1\leq j\leq n}}\textrm{max}\{1,|h_{ij}|\}$ for
$${\bm h}=\left(
  \begin{array}{cccccc}
    h_{11}, & \ldots, & h_{1n}, &h_{(n+1)1} ,&\ldots, &h_{(n+1)n}  \\
    h_{21},&\ldots,&h_{2n}, & h_{(n+2)1} ,&\ldots, &h_{(n+2)n}  \\
    \vdots & \vdots & \vdots & \vdots & \vdots & \vdots \\
    h_{n1} ,&\ldots,
&h_{nn} , & h_{(2n)1} ,&\ldots,& h_{(2n)n} \\
  \end{array}
\right)$$
 in
$\mathbb{Z}^{2n^{2}}$.

Since $\#{\rm GL}_{n}(\mathbb{F}_{p})=p^{n^{2}}+O(p^{n^{2}-1})$, if
${\bm h}$ modulo $p$ is nonzero, then by Lemma~\ref{l1} (taking
$\mathcal{M}=I,U=(\overline{h_{ij}})_{\substack{1\leq i\leq n\\1\leq
j\leq n}}, V=(\overline{h_{(n+i)j}})_{\substack{1\leq i\leq n\\1\leq
j\leq n}}, X=A$), we will get

\begin{equation}~\label{ta} \frac{1}{r({\bm h})}\Big|\frac{1}{\#{\rm
GL}_{n}(\mathbb{F}_{p})}\sum_{{\bm x}\in\textrm{Image}(g_{p})}
e({\bm h}\cdot{\bm x})\Big|\ll p^{-1/2}\end{equation}

Notice that
\begin{equation}~\label{ta2}\#\{{\bm h}\in\mathbb{Z}^{2n^2}|0 < ||{\bm h}||_{\infty}\leq H\}\ll H^{2n^2}.\end{equation}

From (\ref{t1}), (\ref{ta}), (\ref{ta2}), and taking
$H=\lfloor p^{1/(2(2n^2+1))}\rfloor$, we get
\begin{equation}~\label{t4}
\begin{aligned}&\Big|\frac{\textrm{cardinality}(\textrm{Image}(g_{p})\cap
R)}{\#{\rm GL}_{n}(\mathbb{F}_{p})}-\textrm{area}(R)\Big|\\&\ll
\frac{2}{H+1}+H^{2n^2}p^{-1/2}\\&\ll
p^{-1/(2(2n^2+1))},\end{aligned}
\end{equation}

Letting $\lim_{p\to\infty}$ in (\ref{t4}), we get our result.

\begin{remark}~\label{r1}  For any fixed $C=\left(\overline{a_{ij}}\right)\in
{\rm GL}_{n}(\mathbb{F}_{p})$, we consider the matrix equation
$BA=C$, where $A=\left(\overline{a_{ij}}\right)$,
$B=\left(\overline{b_{ij}}\right)$ in ${\rm
GL}_{n}(\mathbb{F}_{p})$.

Let
\begin{equation}~\label{def1+}\tilde{g}_{p}:\ {\rm
GL}_{n}(\mathbb{F}_{p}) \rightarrow \underbrace{[0,1] \times [0,1]
\cdots \times [0,1]}_{2n^{2}}
\end{equation}
$$
A=\left(\overline{a_{ij}}\right)\mapsto\left(
  \begin{array}{cccccc}
    \frac{\displaystyle a_{11}}{\displaystyle p}, & \ldots, & \frac{\displaystyle a_{1n}}{\displaystyle p}, &\frac{\displaystyle b_{11}}{\displaystyle p}, & \ldots, & \frac{\displaystyle b_{1n}}{\displaystyle p}  \\
    \frac{\displaystyle a_{21}}{\displaystyle p}, & \ldots, & \frac{\displaystyle a_{2n}}{\displaystyle p}, &\frac{\displaystyle b_{21}}{\displaystyle p}, & \ldots, & \frac{\displaystyle b_{2n}}{\displaystyle p}    \\
    \vdots & \vdots & \vdots & \vdots & \vdots & \vdots \\
   \frac{\displaystyle a_{n1}}{\displaystyle p}, & \ldots, & \frac{\displaystyle a_{nn}}{\displaystyle p}, &\frac{\displaystyle b_{n1}}{\displaystyle p}, & \ldots, & \frac{\displaystyle b_{nn}}{\displaystyle p}   \\
  \end{array}
\right).$$ The same procedure can show Theorem~\ref{ma} is also
established for $\tilde{g}_{p}$. The only change is that we take
$\mathcal{M}=C,U=(h_{ij})_{\substack{1\leq i\leq n\\1\leq j\leq n}},
V=(h_{(n+i)j})_{\substack{1\leq i\leq n\\1\leq j\leq n}}$ and $X=A$
in Lemma~\ref{l1}.  This can be viewed as a
$\textrm{GL}_{n}(\mathbb{F}_{p})$ analogy of the uniform
distribution on modular hyperbolas.

\end{remark}

\begin{remark}  As pointed out by the referee, we may also consider classes of sets for which the desired epsilon-approximation can be achieved with $O(\epsilon^{-\alpha})$
 rectangles and obtain an explicit bound on the discrepancy, depending on $\alpha$. A result by Wolfgang
 M. Schmidt in \cite{Sch} may give an explicit $\alpha$ for say convex
sets.
\end{remark}

\textbf{Proof of Theorem~\ref{s2}:}

The method is exactly the same as in Theorem~\ref{ma}, except that
we replace the estimation of $K({\rm
GL}_{n}(\mathbb{F}_{p}),U,V,\mathcal{M})$ by the estimation of
$S({\rm GL}_{n}(\mathbb{F}_{p}),U)$ (see Lemma \ref{l3}).

\textbf{Proof of Theorem~\ref{s3}:}

By \cite[Theorem 3.15~(iiib)]{Rose}, we have

\begin{equation}\label{new}
\begin{aligned}\#\textrm{SL}_{n}(\mathbb{F}_{p})&=\frac{p^{n(n-1)/2}}{p-1}\cdot\prod_{j=1}^{n}(p^{j}-1)\\&=p^{n(n-1)/2}(p^{2}-1)\cdots(p^{n}-1)
\\&=p^{n^{2}-1}+O(p^{n^{2}-3})
\end{aligned}\end{equation}

The method to prove this theorem is exactly the same as in
Theorem~\ref{ma}, except that we replace the estimation of $K({\rm
GL}_{n}(\mathbb{F}_{p}),U,V,\mathcal{M})$ by the estimation of
$S(\textrm{SL}_{n}(\mathbb{F}_{p}),U)$ (see Lemma \ref{l4}) and
notice that formula (\ref{new}) holds.

\section{Further discussions}
In this section, we discuss the following questions:

Is the image of $\textrm{SL}_{n}(\mathbb{F}_{p})$ under $g_{p}$ (see
(\ref{def1})), uniformly distributed in $[0,1]^{2n^{2}}$?

The case $n=1$ is trivial.

For $n=2$, since
$$\left(
    \begin{array}{cc}
      \bar{a} & \bar{b} \\
      \bar{c} & \bar{d} \\
    \end{array}
  \right)^{-1}=\left(
                 \begin{array}{cc}
                   \bar{d} & -\bar{b} \\
                   -\bar{c} & \bar{a}\\
                 \end{array}
               \right),\ {\rm where}\ \left(
    \begin{array}{cc}
      \bar{a} & \bar{b} \\
      \bar{c} & \bar{d} \\
    \end{array}
  \right)\in \textrm{SL}_{2}(\mathbb{F}_{p}),
$$
one can easily find a nonzero vector
$$
{\bm h}=\left(
  \begin{array}{cccccc}
    h_{11}, & h_{12},&h_{13}, & h_{14}   \\
     h_{21}, & h_{22},&h_{23}, & h_{24}
  \end{array}
\right)\in\mathbb{Z}^{8}$$

such that
$${\bm x}\mapsto e({\bm h}\cdot{\bm x})=1,\ {\rm for\ all}\ {\bm x}\in g_{p}(\textrm{SL}_{2}(\mathbb{F}_{p})).$$
For example, take arbitrary nonzero ${\bm h}$ with
$h_{11}+h_{24}=0$, $h_{12}-h_{14}=0$, $h_{13}+h_{22}=0$ and
$h_{21}-h_{23}=0$.

Hence
$$\lim_{p\rightarrow\infty}\frac{1}{\#{\rm SL}_{2}(\mathbb{F}_{p})}\sum\limits_{{\bm x}\in g_{p}(\textrm{SL}_{2}(\mathbb{F}_{p})
)}e({\bm h}\cdot{\bm x})\neq\int_{[0,1]^{8}}e({\bm h}\cdot{\bm x})d{\bm x}.$$
This implies that
the image of $\textrm{SL}_{2}(\mathbb{F}_{p})$ under $g_{p}$ is not uniformly distributed, that is,
Theorem \ref{ma} does not hold for $\textrm{SL}_{2}(\mathbb{F}_{p})$.

For the case $n\geq 3$, we conjecture that elements and their
inverses in $\textrm{SL}_{n}(\mathbb{F}_{p})$ are also uniformly
distributed.

\begin{remark}As in the proof of \cite[Lemma 4]{Sh}, the above
conjecture may be treated within a general theory of bounds of
exponential sums along varieties. Thus, as pointed out by the
referee, the estimations in \cite{F1,FK,K1,SSk,S1} may be relevant
to this consideration. In this remark, we shall mention and discuss
some of the results in these works, and we hope the above conjecture
may be solved in the future along this way.

Let $V\subset\mathbf{A}_{\mathbf{Z}}^{n}$ be a closed subscheme of
dimension $\leq d$  defined by the vanishing of several
polynomials, let $f(X_{1},\cdots,X_{n})$ be a polynomial defined
as a function on $V$, let $\psi$ be a nontrivial additive
character of $\mathbb{F}_{p}$ , and let
$\bm{h}=(h_{1},\cdots,h_{n})\in\mathbf{A}^{n}(\mathbb{F}_{p})$.
We set $$S_{V,f}(\bm{h};p):=\sum_{(x_{1},\cdots,x_{n})\in
V(\mathbb{F}_{p})}\psi(f(x_{1},\cdots,x_{n})+h_{1}x_{1}+\cdots+h_{n}x_{n}).$$
The ``good" bound which is expected is
$$S_{V,f}(\bm{h};p)\leq C(V,f)p^{d/2},$$
which is essentially equivalent to the Riemann Hypothesis for an
appropriate $L$-function over the finite field $\mathbb{F}_{p}$.

In~\cite{KG}, using the formalism of perversity and the properties
of the geometric Fourier transform, Katz and Laumon proved that,
for $f\equiv 0$ and under an extremely mild hypothesis on $V$  (in
particular, no smoothness assumptions are required), and for $p$
large enough (depending on $V$), the ``good" bound holds for all
$\bm{h}$  lying outside the set of $\mathbb{F}_{p}$-points of
a codimension-one subscheme
$X_{1}\subset\mathbf{A}_{\mathbf{Z}}^{n}$.

Let $f=0$. In~\cite{F1}, Fouvry investigated the structure of this
exceptional locus $X_{1}$ on which the good bound does not hold,
and in particular at the ``size" of the intermediate subsets
$(X_{j}$, say) of the $\bm{h}$ for which the bound for
$S_{V,f}(\bm{h};p)$ deviates from the good one by a factor of
at least $p^{j/2}$. Furthermore, he also applied these results to
prove several new results on the distribution of rational points
of varieties over finite fields.

In the case of  of a general $f$, Fouvry and Katz~\cite{FK}
established the existence of a decreasing filtration
$\cdots\subset X_{j}\subset X_{j-1}\subset\cdots\subset
X_{1}\subset X_{0}:=\mathbf{A}_{\mathbf{Z}}^{n}$, by closed
subschemes of codimension $j$, such that for $p$ large enough and
$\bm{h}\not\in X_{j}(\mathbb{F}_{p})$, the following bound
holds: $$S_{V,f}(\bm{h};p)\leq C(V,f)p^{d/2+(j-1)/2}.$$ Note
that in the above statement, there is essentially no assumption on
the regularity of $V$  or $f$. When $f\equiv 0$, $V_\textbf{C}$ is
smooth, and an extra geometrical condition is satisfied (i.e. the
non-vanishing of a certain ``A -number" which is the rank of an
$l$-adic sheaf attached to the situation), they also improved the
above bound as  $$S_{V,f}(\bm{h};p)\leq
C(V,f)p^{\textrm{sup}(d/2, d/2+(j-2)/2)},$$ for $\bm{h}
\not\in X_{j}(\mathbb{F}_{p}) $. Furthermore, they also presented
several Diophantine applications, in particular, they improved
some of previous results by Fouvry in~\cite{F2}.

Let $k$ be a finite field, let $X/k$ be a closed subscheme of the
projective space $\bold P^{N}$ of dimension $n\geq 1$, defined by
a set of polynomials of fixed degree $D_{1},\cdots, D_{r}$, let
$L\in H^0(X,\mathcal{O}(1))$ be a linear form on $X$ and for a
fixed integer $d$, let $H\in H^0(X,\mathcal{O}(d))$ be a form of
degree $d$; set $X\cap L=X\cap\{L=0\}$ and $V=X-X\cap L$; and let
$\psi\colon k\rightarrow\mathbb{C}^\times$ be a nontrivial
additive character. Denote by $$S:= \sum_{x\in
V(k)}\psi((H/L^d)(x)).$$ In \cite{K1}, Katz proved the sharp upper
bound $|S|\leq C|k|^{n/2}$ (for some constant
$C=C(N,d,D_{1},\cdots, D_{r})$ depending on $N,d,D_{1},\cdots,
D_{r}$ only), under the assumptions that ${\rm char}\,k$ is
coprime with $d$, that $X$, $X\cap L$ and $X\cap L\cap H$ are all
nonsingular, and that $X\cap L$ and $X\cap L\cap H$ have
codimension one and two, respectively, in $X$. In~\cite{K2}, he
further removed many of the smoothness assumptions made above;
more precisely, assuming either ${\rm
H}_{1}\colon$``$X\otimes_{k}{\overline k}$ is irreducible and
integral'' or ${\rm H}'_{1}\colon $ ``$X$ is Cohen-Macaulay and
equidimensional'' and also assuming ${\rm H}_{2}\colon $ ``the
scheme $X\cap L\cap H$ has codimension 2'', he obtained the upper
bound $\vert S\vert\leq C\vert k\vert^{(n+\delta+1)/2}$, where
$\delta$ is the dimension of the singular locus of $X\cap L\cap
H$, at least when ${\rm char}\,k$ is large enough. He also proved
a slightly weaker result, valid in any characteristic coprime with
$d$ under the same assumptions (${\rm H}_{1}$ or ${\rm H}'_{1}$,
and ${\rm H}_{2}$). Furthermore, he also applied these general
bounds to the case of hypersurfaces.

Shparlinski and  Skorobogatov~\cite{SSk} estimated the modulus of
exponential sums over the variety of dimension $n-s$  defined by a
system of s forms in $n$ variables, with a linear form in the
exponent. They also  applied the estimations to the study of the
distribution of rational points on such a variety defined over a
finite field or the field of rationals.

Using the results of Deligne's Weil Conjectures II and a
generalization of Lefschetz hyperplane theorem to singular
varieties, Skorobogatov~\cite{S1} estimated exponential sums with
additive character along an affine variety given by a system of
homogeneous equations, with a homogeneous function in the
exponent. He applied his estimate to obtain an upper bound for the
number of integer solutions of a system of homogeneous equations
in a box. Furthermore, he also applied his estimate
 to uniform distribution of
solutions of a system of homogeneous congruences modulo a prime in
the following sense: the portion of solutions in a box is
proportional to the volume of the box, provided the box is not
very small.

\end{remark}

\textbf{Acknowledgement} The authors are grateful to the anonymous
referee for his/her helpful comments.

Y. Li is supported by the National Natural Science Foundation of
China (Grant No. 11101424 and Grant No. 11071277).


\begin{thebibliography}{??}
\parskip=0pt
\itemsep=0pt
\bibitem{BK} J. Beck, M. R. Khan, On the uniform distribution of inverse modulo
$n$, Period. Math. Hungar. (44) 2002, 147--155.


\bibitem{Ti} M. Damota, R. F. Tichy, Sequences, Discrepancies and
Applications, Lect. Notes in Math. vol. 1652, Springer-Verlag,
1997.



\bibitem{Sh1} K. Ford, M. R. Khan, I. E. Shparlinski, C. L. Yankov,
On the maximal difference between an element and its inverse in
residue rings, Proc. Amer. Math. Soc. (133) 2005, 3463--3468.

\bibitem{Sh} R. Ferguson, C. Hoffman, F. Luca, A. Ostafe, I. E. Shparlinski, Some additive combinatorics problems in matrix
rings, Rev. Mat. Complut. (23) 2010, 501--513.

\bibitem{F2}E. Fouvry, Sur les propri\'et\'es de divisibilit\'e des nombres de
              classes des corps quadratiques, Bull. Soc. Math.
              France, (127) 1999, 95--113 (in French).

\bibitem{F1}E. Fouvry, Consequences of a result of N. Katz and G. Laumon
concerning trigonometric sums, Israel J. Math. 120 (2000), 81--96.

\bibitem{FK} E. Fouvry, N. Katz, A general stratification
theorem for exponential sums, and applications, J. Reine Angew.
Math. (540) 2001, 115--166.

\bibitem{KG} N. M. Katz, G. Laumon,
Transformation de {F}ourier et majoration de sommes
exponentielles, Inst. Hautes \'Etudes Sci. Publ. Math. (62) 1985,
361--418 (in French). See also Erratum: ``{T}he {F}ourier
transform and upper bounds for exponential sums'', Inst. Hautes
\'Etudes Sci. Publ. Math. (69) 1989, 233.

\bibitem{K1}N. M. Katz, Sommes exponentielles. Course taught at the University of Paris, Orsay, Fall 1979,
              With a preface by Luc Illusie,
              Notes written by G{\'e}rard Laumon,
              With an English summary. Ast\'erisque, 79.
Soci\'et\'e Math\'ematique de France, Paris, 1980 (in French).

\bibitem{K2}N. M. Katz, Estimates for "singular''
exponential sums, Internat. Math. Res. Notices (16) 1999,
875--899.

\bibitem{Sh2} M. R. Khan, I. E. Shparlinski, On the maximal
difference between an element and its inverse modulo $n$, Period.
Math. Hungar. (47) 2003, 111--117.

 

\bibitem{LH} Y. Li, S. Hu, Gauss sums over some matrix groups,
J. Number Theory (132) 2012, 2967--2976.
\bibitem{Nie} H. Niederreiter, A. Winterhof, Exponential sums for
nonlinear recurring sequences, Finite Fields Appl. (14) 2008,
59--64.
\bibitem{Rose}H. E. Rose,
A course on finite groups. Universitext. Springer-Verlag London,
Ltd., London, 2009.

\bibitem{Sch} W. M. Schmidt, Irregularities of distribution. IX. Collection of articles in memory of Juri{\u\i}
              Vladimirovi{\v{c}} Linnik.  Acta Arith. (27) 1975
 , 385--396.

\bibitem{SSk} I. E. Shparlinski, A. N. Skorobogatov,
 Exponential sums and rational points on complete
              intersections,   Mathematika  (37) 1990, 201--208.



\bibitem{Sh3} I. E. Shparlinski, A. Winterhof, On the number of
distance between the coordinates of points on modular hyperbolas, J.
Number Theory (128) 2008, 1224--1230.
\bibitem{Sh4} I. E. Shparlinski, Modular Hyperbolas,
Japan. J. Math. (7) 2012, 235--294.

\bibitem{S1}A. N. Skorobogatov, Exponential sums, the geometry
of hyperplane sections, and some Diophantine problems, Israel J.
Math.  (80) 1992, 359--379.

\bibitem{TZ} M.-T. Tsai, A. Zaharescu, On the action of permutations on distances between values of rational functions modulo $p$, Finite Fields Appl. (17) 2011, 481–-487.



\bibitem{Z1}
W. Zhang, On the distribution of inverse modulo $n$, J. Number
Theory (61) 1996, 301--310.


\bibitem{Z2}
W. Zhang, On the distribution of primitive roots modulo $p$,
Publ. Math. Debreceen (53) 1998, 245--255.














\end{thebibliography}
\end{document}